\newtheorem{prop}{Proposition}
\newtheorem{definition}{Definition}
\newtheorem{corollary}{Corollary}
\newtheorem{example}{Example}
\newenvironment{proof}{
    {\bf Proof}}{\hbox{\ }\hfill$|||$
}
\newcommand{\hlf}{\frac{1}{2}}
\newcommand{\figref}[1]{Fig.~\ref{#1}}
\newcommand{\exref}[1]{Example~\ref{#1}}
\newcommand{\corref}[1]{Corollary~\ref{#1}}
\newcommand{\secref}[1]{Section~\ref{#1}}
\newcommand{\propref}[1]{Proposition~\ref{#1}}
\newcommand{\smt}[2]{\Bigl[
 \begin{smallmatrix} #1\\ #2
 \end{smallmatrix}\Bigr]}
\newcommand{\ga}{\alpha}
\newcommand{\gs}{\sigma} 
\newcommand{\eps}{\epsilon}
\newcommand{\gt}{\tau}
\newcommand{\Chi}{\chi}
\newcommand{\bA}{\mathbf{A}}   
\newcommand{\bD}{\mathbf{D}}   
\newcommand{\bI}{\mathbf{I}}   
\newcommand{\bJ}{\mathbf{J}}   
\newcommand{\be}{\mathbf{e}}   
\newcommand{\bj}{\mathbf{j}}   
\newcommand{\br}{\mathbf{n}}   
\newcommand{\R}{\mathbb{R}}
\newcommand{\AL}{{\mathcal{A}}}
\newcommand{\BL}{{\mathcal{B}}}
\newcommand{\DL}{{\mathcal{D}}}
\newcommand{\EL}{{\mathcal{E}}}
\newcommand{\ifn}{indicator function}
\newcommand{\sT}{T}    
\newcommand{\Tbase}{\sT}   
\newcommand{\sH}{{ \cal{H}}}
\newcommand{\supH}{c}
\newcommand{\pln}{{\mathcal{P}}}
\newcommand{\ed}{e}
\newcommand{\fc}{f}
\newcommand{\Dim}{n}
\newcommand{\cl}{c}  
\newcommand{\clp}{\cl'}  
\newcommand{\cls}{\cl^1}  
\newcommand{\Oc}{Overcomplete}  %
\newcommand{\oc}{overcomplete}  %
\newcommand{\sd}{scaled-down}  
\newcommand{\straddl}{straddl}  %
\newcommand{\pt}{\mathbf{p}}  
\newcommand{\nr}{\mathbf{n}}  
\newcommand{\spl}{s}  
\newcommand{\sit}{shift-invariant tessellation}
\begin{document}%
\title{Refinability of splines derived from
regular tessellations}

\author{J\"org Peters}

\maketitle


\begin{abstract}
Splines can be constructed by convolving the \ifn\ of 
a cell whose shifts tessellate $\R^\Dim$.
This paper presents simple, non-algebraic
criteria that imply that, for regular \sit s,
only a small subset of such spline families yield nested spaces:
primarily the well-known tensor-product and box splines.
Among the many non-refinable constructions are hex-splines and
their generalization to the Voronoi cells of non-Cartesian root lattices.
\end{abstract}

\section{Introduction}
Univariate uniform B-splines can be defined by repeated convolution, 
starting with the \ifn s\footnote{An \ifn\ takes on the value one 
on the interval but is zero otherwise.}
of the intervals or cells delineated by knots.
This construction implies local support and 
delivers a number of desirable properties (see \cite{Boor:1978:PGS,deboor87e})
that have made B-splines the representation of choice 
in modeling and analysis. In particular, B-splines are
refinable. That is, they can be exactly represented as a linear combinations
of B-splines with a finer knot sequence. Refinability 
is a key ingredient of multi-resolution and
adaptive and sparse representation of data. Refinability also guarantees
monotone decay of error when shrinking the intervals.
 
\def\swid{0.25\linewidth}
\def\wid{0.30\linewidth}
\begin{figure}[h]
\centering
   \subfigure[natural and man-made hex-tilings 
]{
   \includegraphics[width=\wid]{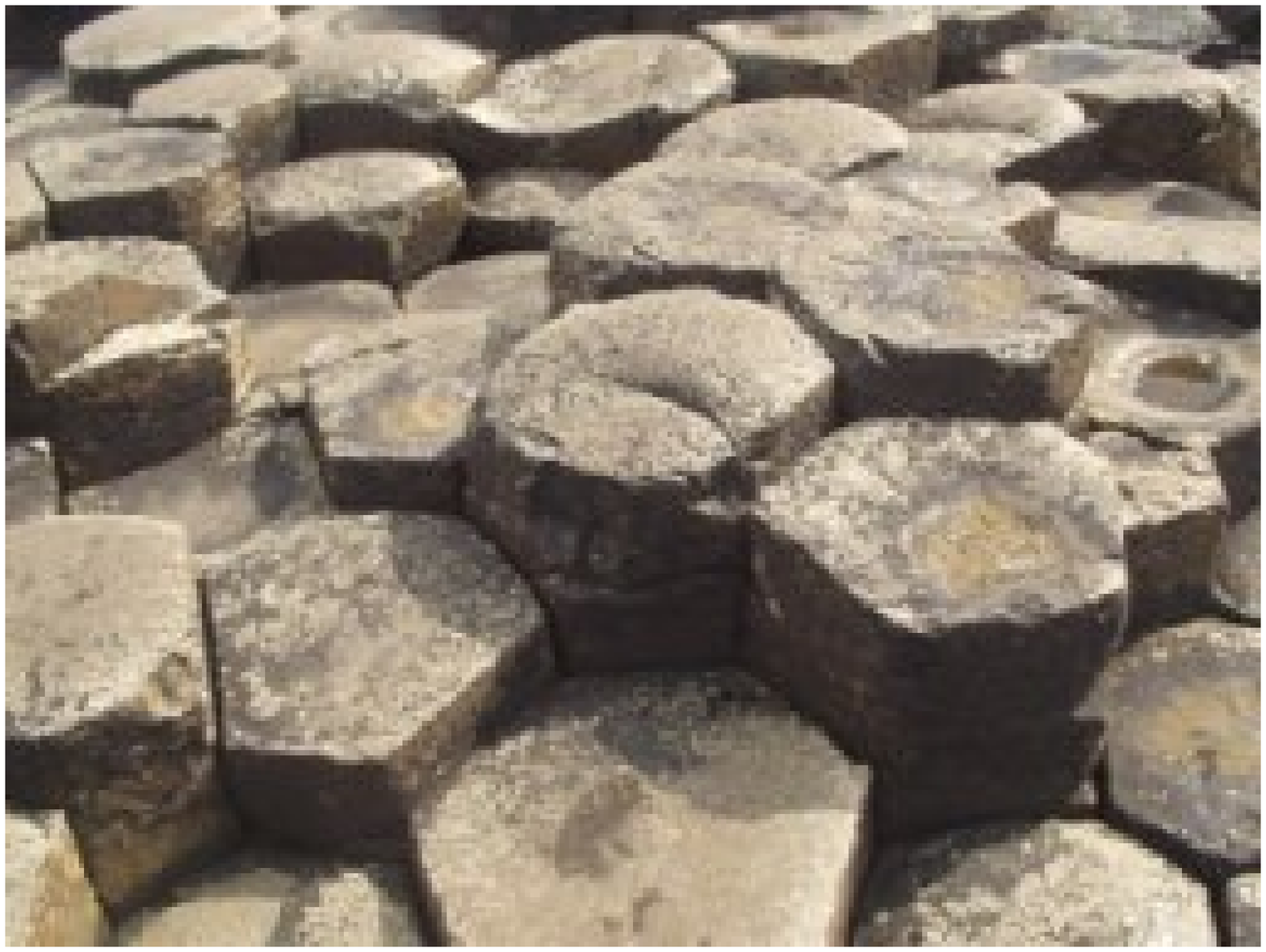}
   \includegraphics[width=\swid]{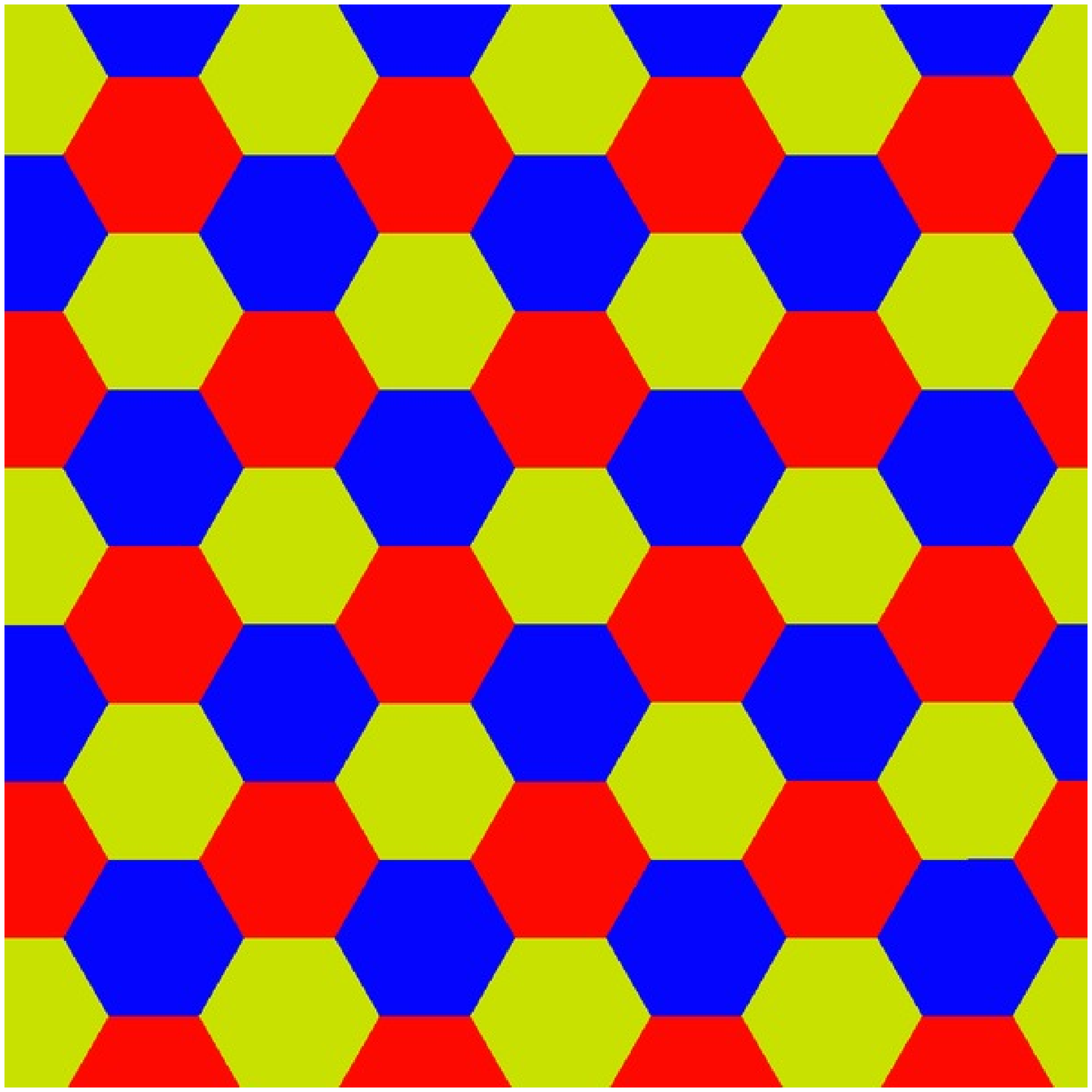}}
   \subfigure[half-scaled hex-tiling]{
\psset{unit=0.6cm} \pspicture(-3,-3)(3,3)
\pspolygon[fillstyle=solid,fillcolor=lightgray]
   (2,0)(1,-1.732)(-1,-1.732)(-2,0)(-1,1.732)(1,1.732)
\rput[l](0,-0.5){$H$}
\pspolygon[fillstyle=none]
   (1,0)(0.5,-0.866)(-0.5,-0.866)(-1,0)(-0.5,0.866)(0.5,0.866)
\pspolygon[fillstyle=none]
   (1,1.732) (0.5,2.598)(-0.5,2.598)(-1,1.732)(-0.5,0.866)(0.5,0.866)
\pspolygon[fillstyle=none]
   (-.5,0.866)(-1,0) (-2,0)(-2.5,0.866)(-2,1.732)(-1,1.732)
\pspolygon[fillstyle=none]
   (-.5,-0.866)(-1,0) (-2,0)(-2.5,-0.866)(-2,-1.732)(-1,-1.732)
\pspolygon[fillstyle=none]
   (1,-1.732) (0.5,-2.598)(-0.5,-2.598)(-1,-1.732)(-0.5,-0.866)(0.5,-0.866)
\pspolygon[fillstyle=none]
   (.5,-0.866)(1,0) (2,0)(2.5,-0.866)(2,-1.732)(1,-1.732)
\pspolygon[fillstyle=none]
   (.5,0.866)(1,0) (2,0)(2.5,0.866)(2,1.732)(1,1.732)
\endpspicture
} 
\caption{Hexagonal tessellations. 
(a) basalt formation 
 and tiles ({http://en.wikipedia.org/wiki/\{Basalt,Hexagonal\_tiling\}})
(b) Non-nesting of the hex partition in Example 1.
}
\label{fig:petal}
\end{figure}
By tensoring univariate B-splines, we can obtain splines
on Cartesian grids in any dimension.
Box-splines \cite{deboor93box} generalize tensoring 
by allowing convolution in directions other than orthogonal ones.
%
As a prominent example in two variables, 
the linear 3-direction box-spline consists
of linear pieces over each of six equilateral triangles surrounding 
one vertex.
Shifts of this `hat function' on an equilateral triangulation sum to one. 
Convolution of the hat function with itself results in a twice
continuously differentiable function of degree 4;
and $m$-fold convolution yields a function of degree $3m-2$
with smoothness $C^{2m}$.
Since this progression skips odd orders of smoothness,
van der Ville et al. \cite{van:04} proposed to directly
convolve the \ifn\ of the hexagon and
build splines customized to the hexagonal tessellation of the plane
(cf. \figref{fig:petal}a). 
They went on to show that the resulting hex-splines share a number 
of desirable properties familiar from box-splines. But the authors
did not settle whether the splines were \emph{refinable}
\cite{perscomBanff},
i.e.\ whether hex-splines of the given hexagonal tessellation
$\Tbase$ can be represented as linear combinations of hex-splines based
on a \sd\ hexagonal tessellation, say $\hlf\Tbase$.
Generalizing the analysis of hex-splines, 
\begin{itemize}
\item
this paper presents simple non-algebraic criteria necessary for 
regular \sit s to admit refinable \ifn s.
\end{itemize}
For example, such a tessellation must contain, for every cell facet $f$, 
the plane through $f$.
Therefore, requiring refinability, even of just the constant spline,
strongly restricts allowable tessellations.
\begin{itemize}
\item
In contrast to tensor-product and box splines,
we show that hex-splines and similar constructions 
can only be \emph{scaled, but not refined}: 
scaled hex-spline spaces are not nested.
\item
The analysis extends to \oc\ families (superpositions) of spline spaces.
\end{itemize}
The following example illustrates how non-refinability leads to
loss of monotonicity of the approximation error under scaling:
for one  or more steps \emph{halving the scale can increase the error}.
By contrast, nested spaces guarantee monotonically decreasing error.

\begin{example} {\rm
\label{ex:error}
Let $\sH^i$ be the space of \ifn s over a
regular tessellation by hexagons of diameter $2^{-i}$ 
and such that, at each level of scaling, the origin
is the center of one hexagon. 
Denote by $H$ the \ifn\ in $\sH^0$ whose support hexagon is
centered at the origin.
$\sH^1$ does not contain a linear combination of functions
that can replicate $H$ 
since the supports of the six relevant scaled \ifn s are bisected by
the boundary of the support of $H$ (see \figref{fig:petal}b).
Correspondingly, the $L^2$ approximation error to $H$ from $\sH^1$ is 
$\frac{6}{2}A^1 > 0$ where $A^1$ is the area of the 
hexagon with diameter $\hlf$. Since the error from $\sH^0$ is
by construction zero, the scaling by $1/2$ 
has increased the error. 
By carefully adding to $H$ an increasing number of \sd\ copies,
small increases in the $L^2$ error can be distributed 
over multiple consecutive steps. 
} \hfill$\Box$
\end{example}

{\bf Overview.}
\secref{sec:lit} reviews tessellations induced by lattices,
hex-splines and their generalizations.
\secref{sec:nonref} exhibits two 
non-algebraic criteria, chosen for their simplicity,
for testing whether a tessellation can support a refinable
space of splines that are constructed by convolution of \ifn s of its cells.
\secref{sec:overcomplete} extends this investigation to a
multiple covering of $\R^\Dim$ by distinct families of \ifn s.

\section{Splines from lattice Voronoi cells}
\label{sec:lit}
A $\Dim$-dimensional lattice is a discrete subgroup of full rank in
a $\Dim$-dimensional Euclidean vector space.
Alternatively, such a lattice may be viewed as inducing a tessellation
of space into identical cells without $\Dim$-dimensional overlap\footnote{
A common convention is to define the cells to be half-open sets 
so that they do not overlap on facets, but nevertheless cover.}.
The tessellation is then generated by the translational shifts
of one cell. For example, 
lattice points can serve as sites of Voronoi cells.
The Euclidean plane admits three highly symmetric \sit s:
partition into equilateral triangles, squares, or hexagons respectively.
Repeated convolution starting with the \ifn\ of any of these 
polygons yields spline functions of local support and increasing degree.
The regular partition into squares 
gives rise to uniform tensor-product B-splines and
the regular triangulation and its hexagonal dual to box splines. 

An interesting additional type of spline arises 
from convolving the \ifn\ $H$ of the hexagon with itself.
%
Such hex-splines, a family of $C^{k-1}$ splines supported on 
a local $k+1$-neighborhood, were developed and analyzed by 
van De Ville et al. \cite{van:04}.
That paper compares hex-splines to tensor-product splines
and uses the Fourier transform of hex-splines to derive, 
for low frequencies, the $L^2$ approximation order, as a combination of 
the projection into the hex-spline space and a quasi-interpolation error.
\cite{condat:05} derived quasi-interpolation formulas 
and showed promising results when applying
hex-splines to the reconstruction of images
(see also \cite{Condat:2006:ERH,Condat:2007:QIS,Condat:2008:NOS}).
Van De Ville et al. \cite{van:04}.
also observed that hexagons are Voronoi cells 
of a lattice and that the cell can
be split into three quadrilaterals, using one of two choices of the 
central split. Thus $H$ can be split into three constant box splines
whose mixed convolution yields higher-order splines
\cite{Kimperscom,journals/tsp/MirzargarE10}. 
However, while box-splines are refinable, we will see that
hex-splines are not refinable in a shift-invariant way.


\section{Refinability constraints}
\label{sec:nonref} 
We consider a polyhedral tessellation $\Tbase$ of $\R^\Dim$ into 
unpartitioned $\Dim$-dimensional units, called \emph{cells}, that 
are bounded by a finite number of $\Dim-1$-dimensional facets.
We denote by $\Chi(\Tbase)$ the space of \ifn s of the cells of $\Tbase$
and by $\Chi(T^{1})$ the space of \ifn s
on some \sd\ copy $T^{1}$ of $\Tbase$.
The space $\Chi(\Tbase)$ is \emph{refinable} if
each \ifn\ in $\Chi(\Tbase)$
can be represented as a linear combination of functions in $\Chi(T^{1})$.

Establishing whether a tessellation $\Tbase$ admits a refinable space of 
\ifn s therefore requires proving the existence of
weights such that a linear combination
of elements in $\Chi(T^{1})$ with these weights reproduces
each element in $\Chi(\Tbase)$. 
\propref{prop:straddle} below provides
a much simpler \emph{necessary condition} that avoids such algebraic analysis.
While our focus is on \sit s, \propref{prop:straddle}
applies more generally and also to cell boundaries of co-dimension 
greater than 1.
Its proof uses the notion of a $\cl^1\in T^1$
\straddl ing a facet of a cell $\cl \in T$. 
A cell $\cl^1$ \emph{\straddl es} a facet $f$ of $\cl$
if there exists a point $\pt$ on $f$, a unit vector $\nr$ 
normal to $f$ at $\pt$ and $\eps > 0$ such that both $\pt+\eps \nr \in \cl^1$
and $\pt-\eps \nr \in \cl^1$.

\begin{prop}
Let $\Tbase$ be a polyhedral tessellation of $\R^\Dim$
and $T^{1}$ its \sd\ copy. Then  $\Chi(\Tbase)$ is
refinable only if every facet of $\Tbase$ is the 
union of facets of $T^{1}$.
\label{prop:straddle}
\end{prop}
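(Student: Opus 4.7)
The plan is to argue the contrapositive: if some facet $f$ of a cell $c\in\Tbase$ fails to be a union of facets of $T^1$, then $\chi_c$ cannot be written as a finite linear combination of elements of $\chi(T^1)$, and hence $\chi(\Tbase)$ is not refinable. The key is to exhibit a point $\pt\in f$ together with a Euclidean ball around $\pt$ on which every indicator from $\chi(T^1)$ is locally constant, while $\chi_c$ is not.

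First I would locate such a witness point. Since $f$ is $(\Dim-1)$-dimensional and the cells of $T^1$ meeting any bounded neighborhood of $f$ contribute only finitely many facets, the $(\Dim-1)$-skeleton of $T^1$ intersects $f$ in a relatively closed, finite union of $(\Dim-1)$-polyhedra. If $f$ is not a union of facets of $T^1$, this intersection cannot cover the relative interior of $f$, so there is some $\pt\in\text{relint}(f)$ that lies in the interior of a single cell $c^1\in T^1$. By openness, a whole Euclidean ball $U$ around $\pt$ sits inside $c^1$; in particular $c^1$ straddles $f$ at $\pt$ in the sense defined immediately above the proposition.

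Next I would exploit this local picture. Since distinct cells of the tessellation $T^1$ meet only along facets, $U$ intersects no cell of $T^1$ other than $c^1$, and hence every element of $\chi(T^1)$ except $\chi_{c^1}$ vanishes on $U$. Consequently, any finite combination $\sum_i w_i \chi_{c_i^1}$ restricted to $U$ is identically equal to the constant $w_{c^1}$. On the other hand, $\pt$ lies on the facet $f$ separating $c$ from its neighbor in $\Tbase$, so $\chi_c$ takes both the values $1$ and $0$ inside $U$. No constant function can match this, and refinability fails.

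The only genuine subtlety, which I expect to be the main obstacle, is producing $\pt$: one must pick a point in the relative interior of $f$ that simultaneously avoids the $(\Dim-1)$-facets of $T^1$, their lower-dimensional faces, and any other facets of $\Tbase$ incident to $f$. This is handled by a straightforward dimension count, since $f$ has relative dimension $\Dim-1$ while everything to be avoided forms a finite union of strictly lower-dimensional polytopes inside any compact neighborhood of the chosen region of $f$. Once $\pt$ is fixed, the remainder of the argument is purely local and does not touch the weights $w_i$, which is precisely the non-algebraic flavor advertised for the criterion.
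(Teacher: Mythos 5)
Your proposal is correct and takes essentially the same route as the paper: your witness point $\pt$ lying in the interior of a single cell $c^1$ of $T^{1}$ is exactly the paper's notion of a cell \emph{straddling} the facet $f$, and your local-constancy contradiction on the ball $U$ is the paper's observation that the straddling cell's indicator would have to be simultaneously $0$ and $1$ to reproduce the unit step across $f$. You simply make explicit two details the paper leaves implicit, namely the dimension count producing $\pt$ and the vanishing of all other indicators of $\Chi(T^{1})$ on $U$.
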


\begin{proof}
Assume that a facet $f$ of a cell $\cl$ in $\Tbase$ is not 
a union of facets of $T^{1}$.
Then, since $T^1$ is a tessellation, 
some cell $\cls$ of $T^{1}$ must \straddl e $f$.
Let $H^1 \in \Chi(T^{1})$ be the \ifn\ of $\cls$
and $H$ the \ifn\ of $\cl$.
Then, in order to reproduce the unit step of $H$ across $f$,
$H^1$ must simultaneously take on both the value $0$ and the value $1$.
\end{proof}

%

Translation-invariant or \sit s are a special case of transitive tilings
where every cell can be mapped to every other cell 
by translation, without rotation.

\begin{prop}
If $T$ is a \sit, $\Chi(T)$ is refinable only if, 
$T$ contains, for each facet $\fc$, the hyperplane through $\fc$. 
\label{prop:ext}
\end{prop}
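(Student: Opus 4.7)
My plan is to extend \propref{prop:straddle} from facets to their hyperplanes by exploiting the shift-invariance of $\Tbase$. I first note that the straddling argument in the proof of \propref{prop:straddle} actually establishes a stronger equivalence: refinability of $\Chi(\Tbase)$ is the same as $T^{1}$ refining $\Tbase$ cellwise (each cell of $\Tbase$ is a disjoint union of the $T^{1}$-cells it contains), because in any expansion $H = \sum_j a_j H^{1}_j$ the indicator-function argument forces each $T^{1}$-cell to lie in a single $\Tbase$-cell with coefficient $a_j \in \{0,1\}$. I will use this stronger form throughout.

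I then argue by contradiction. Suppose the hyperplane $h$ through some facet $\fc$ of $\Tbase$ is not a union of facets of $\Tbase$; then some point $\pt\in h$ lies in the interior of a cell $\clp = \cl + \mathbf{w}$, where $\cl$ is the prototype containing $\fc$ in $h$, $\mathbf{w}$ belongs to the shift lattice $L$, and $0 < \mathbf{w}\cdot\nr < \text{depth}(\cl)$ (with $\nr$ the outward unit normal of $\fc$). Since the cellwise refinement inherits the shift-invariance of $\Tbase$, the $T^{1}$-subdivision of $\clp$ is the $\mathbf{w}$-translate of the $T^{1}$-subdivision of $\cl$. Consequently, the hyperplane $h - (\mathbf{w}\cdot\nr)\nr$ cuts $\cl$'s interior in exactly the same combinatorial pattern that $h$ cuts $\clp$'s interior --- either crossing $T^{1}$-cell interiors or running along $T^{1}$-facets, at a level strictly between $\cl$'s top and bottom facet hyperplanes.

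To reach the contradiction, I iterate the lattice shift: $-k\mathbf{w}$ for $k\in\Z$ yields a family of parallel hyperplanes $h - k(\mathbf{w}\cdot\nr)\nr$ inside $\cl$, each carrying the same nontrivial $T^{1}$-configuration. The orbit $\{k(\mathbf{w}\cdot\nr)\bmod\text{depth}(\cl) : k\in\Z\}$ is either dense in $[0,\text{depth}(\cl))$ --- immediately contradicting the finite $T^{1}$-subdivision of the bounded cell $\cl$ --- or a finite cyclic family when $(\mathbf{w}\cdot\nr)/\text{depth}(\cl)$ is rational. The main obstacle I anticipate lies in the rational case: there the direct orbit produces only finitely many distinct levels, and one must descend along the self-similar chain $\Tbase \to T^{1} \to T^{2} \to \cdots$ (each refinable by scale-invariance of the setup) to either break the commensurability at a sufficiently fine scale or extract a combinatorial inconsistency from the finite $T^{k}$-subdivisions of $\cl$ combined with the required alignment of $T^{k}$-facets on $\fc$.
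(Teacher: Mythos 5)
Your proposal has a genuine gap, and you half-acknowledge it yourself: the commensurable (rational) case is left unresolved, with only the hope that descending the chain $\Tbase \to T^1 \to T^2 \to \cdots$ will ``break the commensurability or extract a combinatorial inconsistency.'' No mechanism for either is given, and none is apparent --- going to \emph{finer} scales only reproduces the hypothesis of \propref{prop:straddle} at a smaller scale and never forces any facet to extend. Worse, the orbit step is flawed before the case split even begins. Translating by $-k\mathbf{w}$ moves the hyperplane and the tessellation \emph{together}: $h-k\mathbf{w}$ cuts the interior of $\cl+(1-k)\mathbf{w}$, so pulling back to the fixed cell $\cl$ always returns the same single level $h-(\mathbf{w}\cdot\nr)\nr$; the iteration does not generate a family of distinct levels inside $\cl$, and the reduction modulo $\mathrm{depth}(\cl)$ has no geometric justification, since the $T^{1}$-subdivision is not periodic in the normal direction with that period. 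Even granting a dense set of levels, the claimed ``immediate contradiction'' would require each level to carry an actual $T^{1}$-facet inside $\cl$; by your own case distinction the translated hyperplane may instead cross $T^{1}$-cell interiors, which contradicts nothing about refinability.

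The idea you are missing is that the paper's proof goes in the opposite direction: it scales \emph{up}, not down. Since the pair $(T^{-1},\Tbase)$ is a scaled copy of the pair $(\Tbase,T^{1})$, the refinability hypothesis applies to it as well, so by \propref{prop:straddle} every facet of the coarser copy $T^{-1}$ --- an \emph{enlarged} copy of a facet of $\Tbase$ --- must be a union of facets of $\Tbase$. Hence a shifted copy of every facet $\fc$ lies strictly inside a union of $\Tbase$-facets, and shift-invariance transfers this extension to $\fc$ itself; taking ever coarser copies $T^{-k}$ extends $\fc$ in every direction by any amount, yielding the whole hyperplane. This makes the entire apparatus of lattice orbits, density, and commensurability unnecessary; your first paragraph (the cellwise-refinement strengthening of \propref{prop:straddle}) is correct but is also not needed, since the facet-union form of \propref{prop:straddle} suffices.
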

\begin{proof}
The coarser-scaled copies of $T$ contain enlarged copies of every facet in $T$.
By \propref{prop:straddle} these copies must be a union of facets of $\Tbase$.
Therefore a \emph{shifted copy} of every facet is strictly contained
in the interior of and so extended by some coarser facet. 
Shift-invariance then implies that \emph{every} facet $f$ lies 
strictly inside such an extension. Ever coarser tessellations provide
a sequence of extensions of $f$ in any direction by any amount.
\end{proof}

By inspection of the three regular tessellations of the plane,
only the Cartesian grid and the uniform triangulation satisfy
\propref{prop:ext}, but not the partition into hexagons. 
\begin{corollary}
Hex splines are not refinable.
\end{corollary}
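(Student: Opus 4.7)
The plan is to apply \propref{prop:ext} to the hexagonal tessellation $\Tbase$ and exhibit a single edge whose supporting line fails to be a union of facets of $\Tbase$; because \propref{prop:ext} states a necessary condition, one such edge suffices.

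I would work with the concrete honeycomb pictured in \figref{fig:petal}(b): take the central cell with vertices $(\pm 2,0)$ and $(\pm 1,\pm\sqrt{3})$, so that cell centers form the triangular lattice generated by $(3,\sqrt{3})$ and $(0,2\sqrt{3})$. Choose $\fc$ to be the horizontal top edge $\{(x,\sqrt{3}):-1\le x\le 1\}$, with supporting line $L=\{y=\sqrt{3}\}$. The decisive step is to notice that the cell centered at $(3,\sqrt{3})$ has vertices $(5,\sqrt{3}),(4,0),(2,0),(1,\sqrt{3}),(2,2\sqrt{3}),(4,2\sqrt{3})$, so the two points $(1,\sqrt{3})$ and $(5,\sqrt{3})$ lie at diametrically opposite corners and $L$ cuts this cell along a vertex-to-vertex diagonal, meeting its open interior. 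Consequently $L$ cannot be written as a union of facets of $\Tbase$, the hypothesis of \propref{prop:ext} is violated, and $\Chi(\Tbase)$ is not refinable. Since every hex-spline is obtained by repeated convolution of the indicator $H\in\Chi(\Tbase)$ on this same shift-invariant tessellation, non-refinability of the order-zero hex-spline $H$ already delivers the corollary.

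The only real work is spotting that the extension of a horizontal hex edge threads exactly through the long diagonal of a hex two cells away --- a one-line arithmetic check once the lattice is written down --- so I do not anticipate any serious obstacle. A marginally slicker variant would invoke the fact that in any regular hex tiling each edge is strictly shorter than the vertex-to-vertex diameter of a neighboring cell lying on its extension, forcing the supporting line into that cell's interior; but the explicit coordinate computation above is already the cleanest form and meshes best with the picture in \figref{fig:petal}(b).
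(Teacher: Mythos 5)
Your proposal is correct and follows the paper's own route: the paper proves the corollary precisely by observing that the hexagonal tessellation violates \propref{prop:ext} (``by inspection''), and your coordinate computation simply makes that inspection explicit, correctly showing the line through the top facet runs along the long diagonal of the hexagon centered at $(3,\sqrt{3})$ and hence through its open interior. The only quibble is cosmetic: that hexagon is the immediate edge-neighbor touching the vertex $(1,\sqrt{3})$, not ``two cells away,'' but your coordinates are right and the argument stands.
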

We can generalize this observation by simplifying the inspection 
criterion.

\def\swid{0.35\linewidth}
\begin{figure}[h]
   \centering
   \psfrag{f1}{$f_1$}
   \psfrag{n1}[r]{$\nr_1$}
   \psfrag{f1p}{$f'_1$}
   \psfrag{f2}{$f_2$}
   \psfrag{n2}{$\nr_2$}
   \psfrag{A}{$\cl$}
   \psfrag{B}{$\clp$}
   \psfrag{sym}{$F_2$}
   \psfrag{int}{$F_1$}
   \includegraphics[width=\swid]{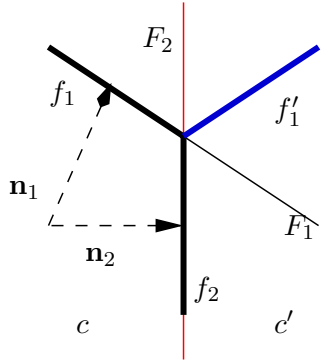}
   \caption{ A pair of facets $\fc_1$, $\fc_2$ of a cell $c$ meet with an
   obtuse angle: the
   outward pointing normals $\nr_1$, $\nr_2$ (dashed) have a strictly 
     positive inner product. 
   }
   \label{fig:obtuse}
\end{figure}
We say that two abutting facets $f_1$ and $f_2$ of a cell $\supH$
\emph{meet with an obtuse angle} if, for $i=1,2$,
there exist unit vectors $\nr_i$ orthogonal to $f_i$ and
outward pointing so that $\nr_1\cdot\nr_2>0$.
\begin{prop}
\label{prop:obtuse}
Let $\Tbase$ be a tessellation of $\R^\Dim$ by shifts of one
polyhedral cell $\cl$.
If two facets $\fc_1$ and $\fc_2$ of $\cl$ meet with an obtuse angle 
and if
$\clp$, the reflection of $\cl$ across $\fc_2$, is a cell of $\Tbase$ 
then $\Chi(\Tbase)$ is not refinable.
\end{prop}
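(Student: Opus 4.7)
The plan is to apply Proposition~\ref{prop:ext} in contrapositive form: I would show that the affine hyperplane $H_1$ carrying the facet $f_1$ is not a union of facets of $T$ by exhibiting a point of $H_1$ in the interior of the cell $c'$.

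First I would localize around the ridge $r := f_1 \cap f_2$. Since $f_1$ and $f_2$ abut, they share a common $(n-2)$-face; pick a point $p$ in its relative interior and restrict to the $2$-plane $P$ through $p$ spanned by $\nr_1$ and $\nr_2$. In $P$ the facets $f_1$ and $f_2$ appear as two rays issuing from $p$, the cell $c$ as a convex planar wedge with apex $p$ between them, and $c'$ (the reflection of $c$ across $f_2$) as the mirror wedge across the ray $f_2$, sharing that ray with $c$.

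Next I would convert the hypothesis $\nr_1\cdot\nr_2>0$ into the statement that the interior angle $\theta$ of the wedge $c$ at $p$ lies in $(\pi/2,\pi)$, using that the outward normals at a ridge meet at angle $\pi-\theta$. Choose coordinates on $P$ with $p$ at the origin, $f_2$ along the positive $x$-axis, and $c$ in the lower half-plane, so $f_1$ is the ray from the origin in direction $(\cos\theta,-\sin\theta)$. Then $H_1\cap P$ is the line through the origin containing this ray, and its extension into the upper half-plane is the opposite ray, of polar angle $\pi-\theta$. The wedge $c'$ occupies polar angles in $(0,\theta)$; and because $\theta\in(\pi/2,\pi)$ we have $0<\pi-\theta<\theta$, so the extended ray lies in the interior of $c'$.

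Hence $H_1$ meets the interior of a cell of $T$, so it is not a union of facets of $T$, and Proposition~\ref{prop:ext} rules out refinability of $\Chi(T)$. The only real obstacle is a clean choice of cross-section and bookkeeping of which side of $f_2$ is which; once in $P$ the key inequality $\pi-\theta<\theta$ comes directly from the obtuse-angle hypothesis, and $\pi-\theta>0$ from the convex-wedge bound $\theta<\pi$.
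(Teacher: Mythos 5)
Your proposal follows essentially the same route as the paper's own proof: both argue via Proposition~\ref{prop:ext} and reach a contradiction by showing that the hyperplane through $f_1$, extended past the ridge $e = f_1\cap f_2$, enters the interior of the reflected cell $c'$ --- which is impossible if that hyperplane is a union of facets of the tessellation, since a facet point cannot be interior to a cell (the paper phrases this as the extension $F_1$ ``splitting'' the unpartitioned cell $c'$). Your explicit coordinates in the cross-sectional plane $P$ spanned by $\nr_1,\nr_2$ are a correct, slightly more detailed version of the paper's normal-vector bookkeeping, and the key inequality $0<\pi-\theta<\theta$ is exactly the paper's observation that $F_1$ meets $f_2$ at an acute angle inside $c'$.

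One step needs repair, however: you derive $\pi-\theta>0$ from a ``convex-wedge bound $\theta<\pi$,'' but convexity of $c$ is nowhere assumed, and the hypothesis $\nr_1\cdot\nr_2>0$ by itself does \emph{not} force $\theta<\pi$: with outward normals one has $\nr_1\cdot\nr_2=-\cos\theta$, which is positive for all $\theta\in(\pi/2,\,3\pi/2)$, so reflex dihedral angles (e.g.\ the reentrant corner of an L-shaped cell) satisfy the obtuse-angle condition too. The paper closes this with a one-line argument you should add: since $c'$ is a cell of the tessellation, it cannot overlap $c$ in dimension $\Dim$; but if $\theta>\pi$, then in your cross-section the reflected wedge occupying polar angles $(0,\theta)$ would overlap the original wedge occupying $(-\theta,0)$ near $p$. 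Hence $\theta<\pi$ follows from the non-overlap of cells, not from convexity, and with that patch your proof is complete and coincides with the paper's.
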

\begin{proof}
Assume $\Chi(\Tbase)$ is refinable under the given conditions.
Let $\clp$ be the reflection of $\cl$ across (the plane through)
$\fc_2$.  Since $\clp$ must not overlap $\cl$,
obtuse angles exceeding $\pi$, such as the reentrant corner of
an L-shaped cell, cannot occur in $\cl$.
Denote by  $\fc'_1$ the reflection of $\fc_1$ across $\fc_2$ and
by $e$ the common intersection of $f_1$, $f_2$ and $f'_1$ 
(see \figref{fig:obtuse}). 

Within $\clp$, by reflection, the facets $\fc_2$ and $\fc'_1$ meet at 
$\ed$ with an obtuse angle. 
By \propref{prop:ext} the extension $F_1$ of $\fc_1$ lies in $\Tbase$.
Since the outward-pointing normal of $\fc_2$ with respect to
$\clp$ is $-\nr_2$,
$\fc_2$ and $F_1$ meet at $\ed$ with an acute angle.
Therefore $F_1$ extends $\fc_1$ into and splits $\clp$.
This contradicts the definition of a cell as an unpartitioned unit
and hence the initial assumption.
\end{proof}

The next \propref{prop:obtuse} allows us to quickly decide 
which of the (symmetric crystallographic) root lattices 
$\AL_n$, $\AL^*_n$, 
$\BL_n$, 
$\DL_n$, $\DL^*_n$, 
$\EL_j$, $j=6,7,8$ \cite{conway98}
are suitable for building refinable splines by convolution of
their nearest-neighbor (Voronoi) cells.
\begin{corollary}
Splines obtained by convolving the Voronoi cell of a
non-Cartesian crystallographic root lattice are not refinable.
\label{cor:voronoi}
\end{corollary}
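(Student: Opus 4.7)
The natural approach is to apply \propref{prop:obtuse} to the Voronoi tessellation of each listed lattice. Two points must be verified for each non-Cartesian root lattice: (i) that the reflection of a Voronoi cell $\cl$ across some facet $\fc_2$ produces another cell $\clp$ of the tessellation, and (ii) that $\cl$ has a pair of adjacent facets $\fc_1$ and $\fc_2$ meeting at an obtuse angle.

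For (i), I would invoke the Weyl-group symmetry of root lattices. In any lattice Voronoi tessellation, a facet shared by the Voronoi cells at lattice points $p$ and $q$ lies on the perpendicular bisector of $pq$. For the crystallographic root lattices $\AL_n$, $\AL_n^*$, $\DL_n$, $\DL_n^*$, $\EL_6$, $\EL_7$, $\EL_8$ the Voronoi-relevant vectors $q-p$ lie in the reflection set generating the Weyl group, so the reflection across the perpendicular bisector is a lattice automorphism. Consequently it maps the Voronoi cell at $p$ onto the Voronoi cell at $q$, and hypothesis (i) of \propref{prop:obtuse} is met at \emph{every} facet.

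For (ii), I would argue that a centrally symmetric convex polytope in $\R^\Dim$ that tiles space by translation and all of whose adjacent facets meet at right angles must be a rectangular parallelotope, and hence must arise from a (scaled) Cartesian lattice. The Voronoi cell of any other root lattice therefore possesses some non-orthogonal adjacent facet pair; central symmetry then pairs each acute meeting with an obtuse one across the centre, supplying the configuration required by \propref{prop:obtuse}.

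The main obstacle is making (ii) airtight without descending into a case-by-case inspection. For a self-contained treatment, the quickest route is to verify obtuse facet pairs lattice by lattice using the known cell structures: hexagonal cells for $\AL_2^*$ (already covered by \corref{cor:voronoi}'s two-dimensional precursor), permutohedra for $\AL_n^*$, rhombic-dodecahedral and related cells for $\DL_n$ and $\DL_n^*$, and Gosset-polytope cells for $\EL_j$, each of which has explicit adjacent facets with strictly positive outward-normal inner product. Once (i) and (ii) are in hand for each lattice, \propref{prop:obtuse} immediately delivers non-refinability.
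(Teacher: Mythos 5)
Your overall strategy---reduce the corollary to \propref{prop:obtuse} by verifying (i) the reflection hypothesis and (ii) an obtuse abutting facet pair---is the same as the paper's, and your point (i) is actually handled more explicitly than in the paper, which leaves the reflection condition implicit in the symmetry of the lattices. (One caveat even there: for the duals $\AL_n^*$ and $\DL_n^*$ the Voronoi-relevant vectors are not roots, so ``the reflection set generating the Weyl group'' does not literally apply; you would need to check separately that reflection in each relevant vector is a lattice automorphism.)

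The genuine gap is in (ii). Your general argument fails at the step ``central symmetry then pairs each acute meeting with an obtuse one across the centre.'' Central inversion maps the abutting pair $(\fc_1,\fc_2)$ with outward normals $(\nr_1,\nr_2)$ to the abutting pair $(-\fc_1,-\fc_2)$ with normals $(-\nr_1,-\nr_2)$, and $(-\nr_1)\cdot(-\nr_2)=\nr_1\cdot\nr_2$: the angle type is \emph{preserved}, not flipped. The pair that would be obtuse, $(\fc_1,-\fc_2)$, is not produced by the symmetry and need not be abutting at all, so a cell with only acute and right non-trivial meetings is not excluded by your dichotomy (your right-angles-imply-box lemma is also asserted without proof, though it is plausible). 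You recognize this and retreat to a lattice-by-lattice verification---but then merely assert that the permutohedra, rhombic dodecahedra, and Gosset cells ``have explicit adjacent facets with strictly positive outward-normal inner product,'' which is precisely the content of the paper's proof and is left unexecuted in yours. The paper's device is to place a Voronoi site at the origin, so that facet normals \emph{are} the position vectors of adjacent nearest neighbors, and then to compute inner products from generator matrices: for $\AL_n$, with $\bA_n = \bI_n + \frac{c_n}{n}\bJ_n$, one gets $\bA_n\be_1\cdot\bA_n(\be_1+\be_2) = \frac{2}{n}(n+\sqrt{n+1}-1)>0$; analogously for $\AL_n^*$; the value $3$ for $\DL_n$ and $2$ for $\DL_n^*$; and inner product $1$ from explicit root vectors for $\EL_6$, $\EL_7$, $\EL_8$ and for the diagonal directions of $\BL_n$ (a case your list omits entirely). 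Without these computations, or the repaired general lemma, your proposal does not yet establish the corollary.
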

\begin{proof}
We test whether the Voronoi cells of the root lattices contain 
a pair of abutting faces that meet with an obtuse angle. 
We may assume that one Voronoi site (cell center) is at the origin.
By definition of a Voronoi cell, 
the position vectors $\nr_1$ and $\nr_2$ of two adjacent nearest neighbors,
as identified by their root system,
are normal to the corresponding abutting bisector facets.
Therefore these facets meet with an obtuse angle if $\nr_1\cdot\ \nr_2 > 0$.

The $\AL_n$ lattice is traditionally defined via an embedding 
in $\R^{\Dim+1}$, $\Dim>1$. 
More convenient for our purpose is the alternative 
geometric construction in $\R^\Dim$ via the 
$n\times n$ generator matrix $\bA_n := \bI_n + \frac{c_n}{n}\bJ_n$
of Theorem 1 of \cite{kim:2010:andual}.
Here $\bI_n$ is the identity matrix, $\bJ_n$ the $n\times n$
matrix of ones and $c_n := \frac{-1+\sqrt{n+1}}{n}$. 
Denoting the $i$th coordinate vector by $\be_i$,
we choose $\be_1$ and $\be_1+\be_2$ on 
the Cartesian grid, and map them via $\bA_n$ to the 
nearest $\AL_n$ neighbors of the origin. 
The inner product of the images of $\be_1$ and $\be_1+\be_2$ is
\begin{equation*}
   \bA_n\be_1 \cdot \bA_n(\be_1+\be_2) 
   =
   \frac{n+4c_n+c_n^2}{n}
   =
   \frac{2}{n}(n+\sqrt{n+1} - 1) > 0.
\end{equation*}
For the $\AL^*_n$ lattice, the computation is identical except that 
$c_n := \frac{-1+\frac{1}{\sqrt{n+1}}}{n}$.
The inner product is 
$\frac{1}{n(n+1)}(n^2 -2n-2+2\sqrt{n+1})) > 0$.

For the $\DL_n$ lattice, defined in $n\ge 3$ dimensions, 
the generator matrix is
$
   \bD_n :=
   \begin{bmatrix}
   \bI_{n-1} & - \be^{n-1}_{n-1} \\
   -\bj^t_{n-1} & - 1 \\
   \end{bmatrix}
$
(see e.g.\  Section 7 of \cite{Kim:2011:SBS})
and
\begin{equation*}
   \bD_n\be_1 \cdot \bD_n(\be_1+\be_2) 
   = 3 > 0.
\end{equation*}
Since $\bD^{-t}_n$ is the generator of $\DL^*_n$, the inner 
product for $\DL^*_n$ is  $2$.

For $\BL_n$, the Cartesian cube lattice has an inner product of $0$
identifying its uniform tensor-product B-spline constructions 
as potentially refinable
(which indeed they are). On the other hand,
splitting each cube by adding the diagonal directions
of the full root system
\cite{Kim:SBL:2008} yields the inner product 
$\be_1 \cdot \bj = 1$.

For $\EL_6$, we select the root vectors 
$(1,1,0,0,0,0) $ and $(1,1,1,1,1,\sqrt{3})/2$ with inner product $1$.
For $\EL_7$, we select the root vectors 
$(1,1,0,0,0,0,0) $ and $(1,1,1,1,1,1,\sqrt{2})/2$ with inner product $1$.
For $\EL_8$, we select the root vectors 
$(1,1,0,0,0,0,0,0) $ and $\bj_8/2$ with inner product $1$.
\end{proof}

The equilateral triangulation in $\R^2$
is dual to the `honeycomb lattice' which is not a standard lattice.
The equilateral triangulation yields an inner product of 
$\frac{-1}{2}$ compatible with refinability
and indeed plays host to the refinable `hat' function.

\section{\Oc\ spaces}
\label{sec:overcomplete}
Since the evaluation of hex-splines by convolving 
three families of box splines already makes use of a large number of
terms, it is reasonable to investigate whether superposition 
of several families of hex-splines 
are refinable as a family.
That is, we consider a family of distinct \sit s $\{\Tbase_j\}_{j=0..J}$ 
obtained by shifts of $\Tbase_0$.
Their union covers $\R^\Dim$ $J+1$-fold.
We check refinability of the family, i.e.\
whether each member of the family can be expressed
as a linear combination of the \sd\ copies of splines of
the family. \exref{ex:hexref} makes this concrete for $J=2$.
\medskip

\begin{figure}[!ht]
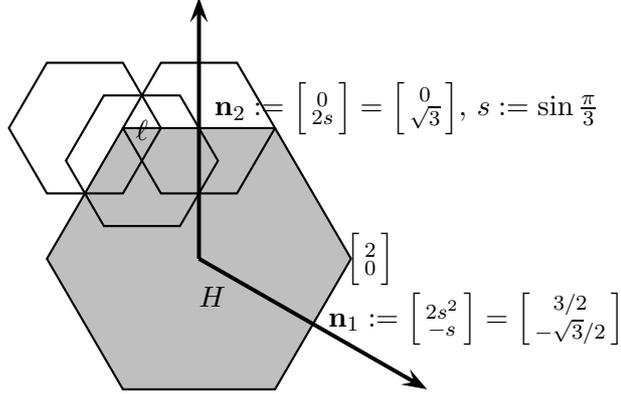

\centering
\psset{unit=1.0cm} \pspicture(-3,-2)(3,3)
\pspolygon[fillstyle=solid,fillcolor=lightgray](2,0)(1,-1.732)(-1,-1.732)(-2,0)(-1,1.732)(1,1.732)
\def\ss{0.8660254040}
\def\s4{0.8660254040}
\psline[linewidth=1.5pt,arrowscale=1.5]{->}(0,0)(0,3.464) 
\psline[linewidth=1.5pt,arrowscale=1.5]{->}(0,0)(3,-1.732) 
\rput[l](0.2,2){$\br_2 := \smt{0}{2s} = \smt{0}{\sqrt{3}}$,\ 
$s := \sin \frac{\pi}{3}$}
\rput[l](1.7,-0.8){$\br_1 := \smt{2s^2}{-s} = \smt{3/2}{-\sqrt{3}/2}$}
\rput[l](0,-0.5){$H$}
\pspolygon[fillstyle=none](0.5,2.598)(-0.5,2.598)(-1,1.732)(-0.5,0.866)(0.5,0.866)(1,1.732)
\pspolygon[fillstyle=none](-1.0,2.598)(-2,2.598)(-2.5,1.732)(-2,0.866)(-1,0.866)(-.5,1.732)
\pspolygon[fillstyle=none](-.25,2.165)(-1.25,2.165)(-1.75,1.3)(-1.25,0.433)(-.25,0.433)(0.25,1.299)
\rput(-0.75,1.7){$\ell$} 
\rput(2.25,0){$\smt{2}{0}$} 
\endpspicture
\caption{ A lozenge-shaped pair of triangles $\ell$
is in the common support of three half-scaled translated copies
of the grey hexagon $h$.
}
\label{fig:hexref}
\end{figure}

\begin{example}
\label{ex:hexref}
{\rm
Denote by $\Tbase_0$ a tessellation of the plane into unit-sized hexagons
and by $\Tbase_1$ and $\Tbase_2$ its shifts by integer multiples of
$\hlf\br_1$ and $\hlf\br_2$
(see \figref{fig:hexref}).
Let $H \in \Chi(\Tbase_0)$ be the \ifn\ of the unit hexagon $h$
centered at the origin.
Consider the three $\hlf$-scaled, translated copies of $h$ shown
in \figref{fig:hexref}.
The three copies intersect in a lozenge-shaped pair of triangles $\ell$.
No other shifts of the $\hlf$-scaled hexagons in $T_0$, $T_1$ or $T_2$
overlap $\ell$.
Therefore
any linear combination of \ifn s in $\{\Tbase_j\}_{j=0..J}$
has a single value on $\ell$. 
Since $\ell$ straddles the boundary of $h$, 
this constant linear combination would have to be simultaneously 1 and 0
to replicate $H$.
\hfill $\Box$}
\end{example}

\exref{ex:hexref} suggests the following generalization of \propref{prop:ext}.
A \emph{superposition} $T$ of a family of \sit s
$\{\Tbase_j\}_{j=0..J}$ of $\R^\Dim$ is the tessellation obtained 
by partitioning $\R^\Dim$ by all cell facets in any of the
tessellations $\Tbase_j$. $T$ differs from $\{\Tbase_j\}_{j=0..J}$
in that it contains fractions or \emph{pieces} of the original cells.

\begin{prop}
Given a family $\{\Tbase_j\}_{j=0..J}$ of polyhedral \sit s of $\R^\Dim$,
the space of \ifn s $\bigcup_{j=0..J}\Chi(\Tbase_j)$ 
is refinable only if the superposition $T$ of $\{\Tbase_j\}_{j=0..J}$
contains, for each facet $f$, the hyperplane through $f$. 
\label{prop:family}
\end{prop}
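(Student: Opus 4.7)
The plan is to mimic the proof of Proposition~\ref{prop:ext} by first establishing an overcomplete analog of Proposition~\ref{prop:straddle} and then amplifying it with the same scaling-and-shift argument.

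\textbf{Step 1 (constancy on cells of $T^1$).} Let $T^1$ denote the superposition of the half-scaled family $\{\Tbase_j^1\}$. By construction $T^1$ contains every facet of every $\Tbase_j^1$, so each function in $\bigcup_j \Chi(\Tbase_j^1)$ is constant on each cell of $T^1$. Hence every linear combination of such functions is constant on each cell of $T^1$.

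\textbf{Step 2 (overcomplete straddling).} I would show that if $\bigcup_j \Chi(\Tbase_j)$ is refinable, then every facet of every $\Tbase_j$ must be a union of facets of $T^1$. Suppose not: some cell $\cls \in T^1$ straddles a facet $\hat f$ of some $\Tbase_j$. Let $H\in\Chi(\Tbase_j)$ be the indicator of the cell of $\Tbase_j$ having $\hat f$ as a facet. By Step~1 any linear combination of functions in $\bigcup_j \Chi(\Tbase_j^1)$ takes a single value on $\cls$, and therefore cannot realise the unit step of $H$ across $\hat f$. This is precisely the obstruction already exhibited in Example~\ref{ex:hexref}.

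\textbf{Step 3 (scale and shift).} Rescaling refinability uniformly by $2$, the family $\bigcup_j \Chi(2\Tbase_j)$ is refinable into $\bigcup_j \Chi(\Tbase_j)$, and applying Step~2 at this scale shows that every facet of every $2\Tbase_j$ is a union of facets of the unscaled superposition $T$. In particular, each facet $\hat f$ of $\Tbase_j$ lies strictly inside the doubled facet $2\hat f$, which is itself contained in the facet-union of $T$. Because each individual $\Tbase_j$ is shift-invariant, every lattice translate of $\hat f$ enjoys the same extension property; iterating to ever coarser scales extends $\hat f$ within the facet-union of $T$ by arbitrarily large amounts in any direction along its hyperplane. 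Since each facet $f$ of $T$ is a piece of some facet $\hat f$ of some $\Tbase_j$, its entire containing hyperplane lies in the facet-union of $T$.

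\textbf{Main obstacle.} The real step of substance is Step~1: in the overcomplete setting the correct region on which to assert constancy of linear combinations is no longer a cell of any single $\Tbase_j^1$ but a cell of the superposition $T^1$. Once this identification is in place, the remaining steps are nearly verbatim copies of the proofs of Propositions~\ref{prop:straddle} and \ref{prop:ext}, apart from the minor bookkeeping that shift-invariance must be invoked on each $\Tbase_j$ separately, since $T$ itself is only invariant under the common sublattice of the lattices of the $\Tbase_j$.
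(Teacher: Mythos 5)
Your proposal is correct and takes essentially the same route as the paper's own proof: the paper likewise observes that every linear combination of the scaled family is constant on each unpartitioned piece $c^1$ of the scaled superposition $T^1$, derives the straddling contradiction exactly as in \propref{prop:straddle} (your Steps 1--2), and then concludes via the coarser-scale extension argument of \propref{prop:ext} (your Step 3). Your closing remark that shift-invariance must be invoked for each $\Tbase_j$ separately, since the superposition $T$ is only invariant under the common sublattice, is a sound sharpening of the paper's terser ``analogous to \propref{prop:ext}'' step.
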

\begin{proof}
Assume that $\bigcup_{j=0..J}\Chi(\Tbase_j)$ is refinable.
Assume additionally that a \sd\ copy $T^1$ of $T$ contains an
unpartitioned piece $c^1 \in T^1$ 
that \straddl es a facet $\fc$ of some cell $\cl$ in one of the $\Tbase_j$.
Analoguous to the proof of \propref{prop:straddle},
any linear combination of the \ifn s of the family $\{\Tbase_j\}_{j=0..J}$
that replicates the step of the \ifn\ on $\cl$ across $\fc$ 
would have to be simultaneously $0$ and $1$ on $c^1$.
Therefore every facet of $T$ must be a union of facets of $T^{1}$.
Analogous to the proof of \propref{prop:ext},
the claim follows by considering ever coarser-scaled copies of the $\Tbase_j$
and hence of $T$.
\end{proof}

\exref{ex:overlap} illustrating \propref{prop:overlap}
shows that \propref{prop:family} does not
yield a sufficient constraint.
In generalizing \propref{prop:obtuse} to \oc\ spline
families, we restrict attention to families
of tessellations that minimize facet overlap.
\begin{definition}[efficient family of tessellations] 
A family of tessellations $\{T_j\}_{j=0..J}, T_j \in \R^\Dim$ 
is \emph{efficient} if the intersection of more than two cell facets 
is of co-dimension greater than $1$. 
\end{definition}

\begin{figure}[h]
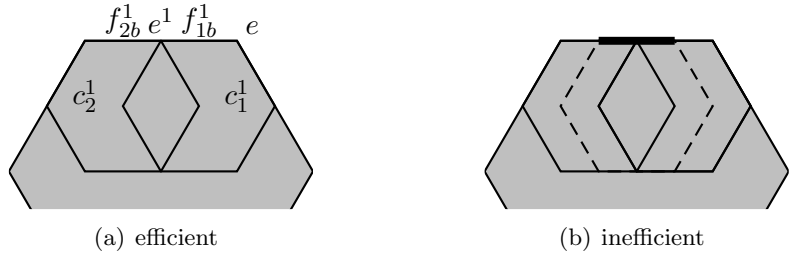

\centering
\subfigure[efficient] {
\psset{unit=1.0cm} \pspicture(-3,-0.5)(3,2)
\begin{psclip}{\pspolygon[linecolor=white](-2,2.3)(-2,-.5)(2,-.5)(2,2.3)}
   \pspolygon[fillstyle=solid,fillcolor=lightgray](2,0)(1,-1.732)
   (-1,-1.732)(-2,0)(-1,1.732)(1,1.732)
   \pspolygon[fillstyle=none](.5,0.866)(0,0)(-1,0)(-1.5,.866)(-1,1.732)(0,1.732)
   \pspolygon[fillstyle=none](1.5,0.866)(1,0)(0,0)(-0.5,.866)(0,1.732)(1,1.732)
   \rput(-0.5,2.0){$\fc^1_{2b}$}
   \rput( 0.5,2.0){$\fc^1_{1b}$}
   \rput(0.0,2.0){$\ed^1$}
   \rput(1.2,1.9){$\ed$}
   \rput(-1.0,1.0){$\supH^1_{2}$}
   \rput( 1.0,1.0){$\supH^1_{1}$}
   \end{psclip}
   \endpspicture
}
\subfigure[inefficient] {
\psset{unit=1.0cm} \pspicture(-3,-0.5)(3,2)
\begin{psclip}{\pspolygon[linecolor=white](-2,2.3)(-2,-.5)(2,-.5)(2,2.3)}
   \pspolygon[fillstyle=solid,fillcolor=lightgray](2,0)(1,-1.732)(-1,-1.732)(-2,0)(-1,1.732)(1,1.732)
   \pspolygon[fillstyle=none](.5,0.866)(0,0)(-1,0)(-1.5,.866)(-1,1.732)(0,1.732)
   \pspolygon[fillstyle=none,linestyle=dashed](1.0,0.866)(0.5,0)(-0.5,0)(-1,.866)(-0.5,1.732)(0.5,1.732)
   \pspolygon[fillstyle=none](1.5,0.866)(1,0)(0,0)(-0.5,.866)(0,1.732)(1,1.732)
   \pspolygon[fillstyle=none](1.5,0.866)(1,0)(0,0)(-0.5,.866)(0,1.732)(1,1.732)
   \psline[linewidth=3.0pt](-.5,1.732)(0.5,1.732) 
   \end{psclip}
   \endpspicture
}
\caption{ (a) Two \sd\ shifts of a hexagon cover the top facet (edge)
of the original hexagon exactly once from inside (gray region)
and hence twice if we continue the tessellation by reflection across the 
facet. Adding the dashed \sd\ hexagon in (b) 
covers a part of the top (thick edge) twice from inside so 
that reflection yields an inefficient family of tessellations.
}
\label{fig:efficient}
\end{figure}

\begin{prop}
\label{prop:overlap}
Let $\{T_j\}_{j=0..J}$ be an efficient family of tessellations of $\R^\Dim$ 
by shifts of one polyhedral cell $\cl \in T_0$.
If two facets $\fc_a$ and $\fc_b$ of $\cl$ meet with an obtuse angle $\ga$
and no facet of $\cl$ meets $\fc_b$ with an angle less than $\pi-\ga$,
and if $\clp$, the reflection of $\cl$ across $\fc_b$, is a cell of $T_0$
then $\bigcup_{j=0..J}\Chi(\Tbase_j)$ is not refinable.
\end{prop}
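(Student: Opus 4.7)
My plan is to mirror the proof of \propref{prop:obtuse}, with \propref{prop:family} replacing \propref{prop:ext}, and to use the efficient-family hypothesis to eliminate the new covering possibilities that arise in the \oc\ setting.

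Assume for contradiction that $\bigcup_j\Chi(\Tbase_j)$ is refinable. By \propref{prop:family} the hyperplane $F_a$ through $\fc_a$ lies in the superposition $T$. I would set up exactly as in the proof of \propref{prop:obtuse}: let $\clp\in \Tbase_0$ be the reflection of $\cl$ across $\fc_b$, let $\fc'_a$ be the reflection of $\fc_a$, and let $e=\fc_a\cap\fc_b$, so that the facets $\fc_b,\fc'_a$ of $\clp$ meet at $e$ with angle $\alpha$. Because $\alpha$ is obtuse, $F_a$ enters the interior of $\clp$ at $e$ at angle $\pi-\alpha<\alpha$ with $\fc_b$; the angle hypothesis, transferred to $\clp$ by reflection, guarantees that no facet of $\clp$ at $e$ points along $F_a$.

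The new ingredient is that $\clp$ may be further partitioned in $T$ by facets from $\Tbase_j$ with $j\neq 0$. The portion of $F_a$ just past $e$ inside $\clp$ must therefore coincide with a facet $\tilde\fc$ of some $\Tbase_{j_0}$, $j_0\neq 0$. Since every $\Tbase_j$ consists of translates of $\cl$, this $\tilde\fc$ is carried by a cell $\tilde\cl=\cl+w\in\Tbase_{j_0}$, where $w$ is parallel to $F_a$ and $\tilde\fc$ is a translate of the facet of $\cl$ with the same normal as $\fc_a$. I would then argue by dichotomy on the relative position of $\tilde\fc$ and $\fc_a$. If $\tilde\fc$ overlaps $\fc_a$ on a codimension-$1$ patch, that patch carries four cell facets --- $\cl$ and its $\Tbase_0$-neighbour across $\fc_a$ together with $\tilde\cl$ and its $\Tbase_{j_0}$-neighbour across $\tilde\fc$ --- directly violating the efficient-family hypothesis. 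Otherwise $\tilde\fc$ meets $\fc_a$ only in codimension $\ge 2$, so an endpoint of $\tilde\fc$ must coincide with $e$ and hence $w$ is forced to be the vector joining one endpoint of $\fc_a$ to the opposite endpoint. The shifted wedge of $\tilde\cl$ at this corner then meets $\fc_b$ at angle $\pi-\alpha$, and the angle hypothesis (which bounds how steeply any facet of $\cl$ can approach $\fc_b$) prevents $\tilde\cl$ from being placed next to $\cl$ across $\fc_b$ without either creating a further codimension-$1$ coincidence of facets (again violating efficiency) or forcing some cell of the superposition to straddle $\fc_b$, contradicting \propref{prop:family} applied to $\fc_b$.

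The main obstacle is the second case of the dichotomy: in full polyhedral generality one must check that for every facet of $\cl$ with the same normal as $\fc_a$, and for both sides of $F_a$ on which $\tilde\cl$ could sit, the angle condition together with efficiency leaves no room for $\tilde\cl$. Once this local \emph{no-room} observation is verified, the contradiction is immediate and refinability of $\bigcup_j\Chi(\Tbase_j)$ fails.
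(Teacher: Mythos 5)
There is a genuine gap, and it is not patchable: the ``no-room'' observation you defer in the second case of your dichotomy is not merely unverified, it is \emph{false}. The paper's own \exref{ex:overlap} (\figref{fig:overlap}d) exhibits an efficient family of three mutually shifted hexagonal tessellations that satisfies every hypothesis of \propref{prop:overlap} (all angles are $2\pi/3$, hence obtuse; no facet meets $\fc_b$ at an angle below $\pi-\ga=\pi/3$; the reflection of a hexagon across an edge is a cell) and simultaneously realizes exactly the configuration you hope to exclude: facets of the shifted families extend the hyperplane through $\fc_a$ past $\ed$ into $\clp$, abutting the $T_0$-facets only in codimension $2$, with no codimension-$1$ triple overlap and no straddling piece. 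This is precisely why the paper remarks that \exref{ex:overlap} shows \propref{prop:family} is \emph{not sufficient}. The mechanism that powered \propref{prop:obtuse} --- the extended facet $F_1$ splits the cell $\clp$, contradicting that cells are unpartitioned --- evaporates in the overcomplete setting, because the superposition $T$ is explicitly allowed to partition $\clp$ into pieces; there is no combinatorial substitute for it, so no contradiction can be extracted from placement geometry alone, no matter how carefully you run the case analysis on $\tilde\fc$ and $\tilde\cl$. (Your first case is fine: a codimension-$1$ overlap of facets from two families does violate efficiency; but in the realizable configurations the covering facets abut rather than overlap, and your second case then has to close the argument --- and cannot.)

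The paper's actual proof is of a different nature: it is a \emph{valuation-counting} argument on the scaled side of $\fc_b$, not a placement argument along the hyperplane through $\fc_a$. From \propref{prop:family} it extracts two scaled cells $\supH^1_1,\supH^1_2$ whose facets cover $\fc_b$ and share a boundary $\ed^1\ni\pt$; the obtuse angle $\ga$ together with the hypothesis that no facet meets $\fc_b$ at an angle below $\pi-\ga$ guarantees that $\supH^1_1$ and $\supH^1_2$ overlap in an $\Dim$-dimensional piece $i_\cap$ at $\pt$, and the reflection hypothesis supplies the mirror pieces $o_1,o_2,o_\cap$ outside. The two unit-step requirements $\spl(i_1)-\spl(o_1)=1$ and $\spl(i_2)-\spl(o_2)=1$ then \emph{add} on the overlap, forcing $\spl(i_\cap)-\spl(o_\cap)=2$, which is incompatible with the unit step of the \ifn\ being replicated; efficiency is used only afterwards, in a sliver analysis, to show that cells with facets crossing $\pt$ cannot repair this discrepancy without breaking the step elsewhere. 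In short, the obstruction is about the values any linear combination of \ifn s must take on the overlap of the covering cells, and your proposal contains no step of this kind; as written, the route through a purely geometric contradiction cannot succeed.
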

\begin{proof}
Assume $\bigcup_{j=0..J}\Chi(\Tbase_j)$ is  refinable.
Let $\ed$ be the $n-2$ dimensional intersection of two facets
$\fc_a$ and $\fc_b$ of the cell $\cl$.
By \propref{prop:family}, there exist 
$\supH^1_1, \supH^1_2 \in \{T^1_j\}_{j=0..J}$ 
whose two facets $\fc^1_{1b}$ and $\fc^1_{2b}$ lie on $\fc_b$,
whose shared boundary $\ed^1$ is parallel to $\ed$ (for $\Dim=2$,
$\ed^1$ is a point) and that lie both to the same side of $\fc_b$
as $\supH$. Without loss of generality,
$\fc^1_{1b}$ is closer to $\ed$ than is $\fc^1_{2b}$ 
(cf.\ \figref{fig:efficient}a).
When $n=2$, we set $\pt=\ed^1$ and when $n>2$, we pick a point $\pt$
in the interior of $\ed^1$. 

Due to efficiency, of all cells in $\{T^1_j\}_{j=0..J}$
that have one facet on $\fc_b$ and lie to the same side as $\supH$,
exactly three pieces
of the superposition $T^1$ of $\{T^1_j\}_{j=0..J}$
meet at $\pt$:
the piece $i_1$ solely inside $\supH^1_1$, 
the piece $i_2$ solely inside $\supH^1_2$, and 
$i_\cap$, the $\Dim$-dimensional intersection of $\supH^1_1$ and $\supH^1_2$.
The intersection $i_\cap$ exists and is $\Dim$-dimensional due to 
the obtuse angle $\ga $ of $\cl$ at $\ed$ and hence of $\supH^1_2$ at $\ed^1$;
and because $\cl$ and hence $\supH^1_1$ forms no angle less than $\pi-\ga$
with $\fc_b$. By reflection across $\fc_b$, 
there are three analogous pieces $o_j$ outside $\supH$ at $\pt$.
\figref{fig:overlap}a illustrates the situation in
the 2-dimensional plane $\pln$ through $\pt$ and orthogonal to $\ed^1$.

\begin{figure}[!ht]
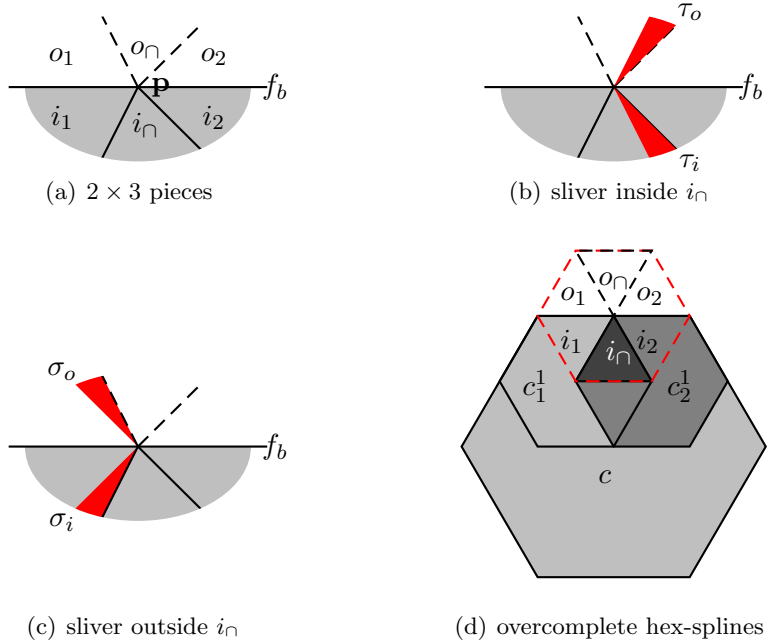

\centering
\subfigure[$2\times 3$ pieces ]{
\psset{unit=1.0cm} \pspicture(-3,-1)(3,1)
\begin{psclip}{\psellipse[linecolor=white](0,0)(1.5,1.0)}
\pspolygon*[linecolor=lightgray] (-1.5,-1)(-1.5,0)(1.5,0)(1.5,-1)
\psline[linestyle=dashed] (1,1)
\psline[linestyle=dashed] (-0.5,1)
\psline (1,-1)
\psline (-0.5,-1)
\end{psclip}
\psline (1.7,0)\psline(-1.7,0)  
\rput( 1.8,-0.0){$\fc_b$}
\rput( 0.3,-0.0){$\pt$}
\rput( 0.1,-0.5){$i_\cap$}
\rput(-1.0,-0.4){$i_1$}
\rput( 1.0,-0.4){$i_2$}
\rput( 0.1, 0.5){$o_\cap$}
\rput(-1.0, 0.4){$o_1$}
\rput( 1.0, 0.4){$o_2$}
\endpspicture
}
\subfigure[sliver inside $i_\cap$]{
\psset{unit=1cm} \pspicture(-3,-1)(3,1)
\begin{psclip}{\psellipse[linecolor=white](0,0)(1.5,1.0)}
\pspolygon*[linecolor=lightgray] (-1.5,-1)(-1.5,0)(1.5,0)(1.5,-1)
\psline[linestyle=dashed] (1,1)
\pspolygon*[linecolor=red] (0.5,1) (1,1) (0,0) 
\psline[linestyle=dashed] (-0.5,1)
\psline (1,-1)
\pspolygon*[linecolor=red] (0.5,-1) (1,-1) (0,0) 
\psline (-0.5,-1)
\end{psclip}
\psline (1.7,0)\psline(-1.7,0)  
\rput( 1.8,-0.0){$\fc_b$}
\rput( 1.0,-1.0){$\gt_i$}
\rput( 1.0, 1.0){$\gt_o$}
\endpspicture
}
\subfigure[sliver outside $i_\cap$]{
\psset{unit=1cm} \pspicture(-3,-2)(3,3)
\begin{psclip}{\psellipse[linecolor=white](0,0)(1.5,1.0)}
\pspolygon*[linecolor=lightgray] (-1.5,-1)(-1.5,0)(1.5,0)(1.5,-1)
\psline[linestyle=dashed] (1,1)
\pspolygon*[linecolor=red] (-1,1) (-.5,1) (0,0) 
\psline[linestyle=dashed] (-0.5,1)
\psline (1,-1)
\pspolygon*[linecolor=red] (-1,-1) (-.5,-1) (0,0) 
\psline (-0.5,-1)
\end{psclip}
\psline (1.7,0)\psline(-1.7,0)  
\rput( 1.8,-0.0){$\fc_b$}
\rput(-1.0,-1.0){$\gs_i$}
\rput(-1.0, 1.0){$\gs_o$}
\endpspicture
}
\subfigure[overcomplete hex-splines]{
\psset{unit=1.0cm} \pspicture(-3,-2)(3,3)
\pspolygon[fillstyle=solid,fillcolor=lightgray](2,0)(1,-1.732)(-1,-1.732)(-2,0)(-1,1.732)(1,1.732)
\rput[l](-0.2,-0.4){$\supH$}
\pspolygon[fillstyle=solid,fillcolor=gray](1.5,0.866)(1,0)(0,0)(-0.5,.866)(0,1.732)(1,1.732)
\pspolygon[fillstyle=none](.5,0.866)(0,0)(-1,0)(-1.5,.866)(-1,1.732)(0,1.732)
\pspolygon[fillstyle=solid,fillcolor=darkgray](0,1.732)(-.5,0.866)(.5,0.866)
\rput[l](.7,0.8){$\supH^1_2$}
\rput[l](-1.2,0.8){$\supH^1_1$}
\pspolygon[linestyle=dashed,linecolor=red,fillstyle=none](0.5,2.598)(-0.5,2.598)(-1,1.732)(-0.5,0.866)(0.5,0.866)(1,1.732)
\pspolygon[linestyle=dashed,fillstyle=none](0.5,2.598)(-0.5,2.598)(0,1.732)
\rput[l](-0.2,2.2){$o_\cap$}
\rput[l](-0.1,1.25){$\textcolor{white}{i_\cap}$}
\rput[l](0.3,1.4){$i_2$}
\rput[l](-0.7,1.4){$i_1$}
\rput[l](0.3,2.0){$o_2$}
\rput[l](-0.7,2.0){$o_1$}
\endpspicture
}
\caption{ Pieces (a) and slivers (red in b,c) in the plane $\pln$.
(d) Concrete partition of \exref{ex:overlap}
into pieces (sectors) of a neighborhood
of point $\pt$ on the partition of a facet $\fc_b$
of the support cell $\supH$ of $H$.
}
\label{fig:overlap}
\end{figure}
 
Denote by $\spl$ the spline formed as a linear combination of \ifn s
of the cells in $\{T^1_j\}_{j=0..J}$ that have one facet on $\fc_b$. 
To replicate the \ifn\ of $\cl$,
$\spl$ must have a unit step across $\fc_b$ and hence
$\spl(i_1) - \spl(o_1) = 1$ and $\spl(i_2) - \spl(o_2) = 1$
where $\spl$ applied to a piece of $T^1$
means evaluating $\spl$ at some point in that piece,
sufficiently close to $\pt$. Due to the overlap of the \ifn s 
on $i_\cap$ and on $o_\cap$, at $\pt$ 
\begin{equation}
   \spl(i_\cap)-\spl(o_\cap) 
   =
   \spl(i_1) + \spl(i_2) - \spl(o_1) - \spl(o_2)
   = 2,
\label{eq:two}
\end{equation}
incompatible with the unit step across $\fc_b$ at $\pt$.


Additional cells of $\{T^1_j\}$
that overlap all six pieces, 
$i_1$, $o_1$, 
$i_\cap$, $o_\cap$, 
$i_2$, $o_2$, 
surrounding $\pt$
do not affect the above difference since their \ifn s are constant.
It remains to consider cells with facets crossing $\pt$ and
it suffices to consider the 2-dimensional plane $\pln$ through $\pt$ 
and orthogonal to $\ed^1$.
In $\pln$, as shown in \figref{fig:overlap}a,b,c,
let the boundary ($i_\cap,i_2$) between $i_\cap$ and $i_2$ 
form a smaller-or-equal angle 
with $\fc_b$ than the boundary ($i_\cap,i_1$).
Due to efficiency, within $\cl$, crossing facets lie either
(b) strictly inside $i_\cap$ or (c) strictly outside $i_\cap$
since the boundaries must not be covered a third time.
In case (b) this strictness implies the existence of a sliver $\gt_i$,
i.e.\ a piece of $T^1$ devoid of crossing facets, 
attached to ($i_\cap,i_2$) and inside $i_\cap$.
Any linear combination of \ifn s corresponding to cells with 
crossing facets of type (b) therefore leaves unchanged the difference in value
and hence the incompatibility \eqref{eq:two}, 
between $\gt_i$ and its reflection $\gt_o$ across $\fc_b$
(see \figref{fig:overlap}b).
%
In case (c), 
any linear combination of \ifn s that modifies the value in $i_\cap$
also modifies the difference in value between two slivers $\gs_i$ and $\gs_o$,
where $\gs_i$ is attached to $(i_\cap,i_1)$ and lies
inside $i_1$ and $\gs_o$ is its reflection across $\fc_b$.
Specifically, when the value of $\spl(i_\cap) - \spl(o_\cap)$ is changed to 
satisfy the step condition, $\spl(\gs_i) - \spl(\gs_o)$ is changed 
away from the prescribed value 1.
Together this contradicts the assumption 
that $\bigcup_{j=0..J}\Chi(\Tbase_j)$ is  refinable.
\end{proof}

The following \exref{ex:overlap} illustrates \propref{prop:overlap}.
 
\begin{example}
\label{ex:overlap}
{\rm 
Consider, as in \figref{fig:overlap}d, shifts 
\begin{equation*}
   H_1(x) :=  H(x-\smt{-c}{s}), \quad
   H_2(x) :=  H(x-\smt{c}{s}), \quad
   c := \cos \frac{\pi}{3},\
   s := \sin \frac{\pi}{3}
\end{equation*}
of the \ifn\  $H(x)$ of a tessellation $T_0$.
The three corresponding tessellations intersect only in single triangles
and their superposition
contains no piece that \straddl es the support hexagon of $H$.
Yet this minimal family\footnote{
$J=2$ yields the minimal number of scaled families that can satisfy  the 
necessary constraints of \propref{prop:family} for hex-splines for 
three reasons.
First, scaling should at least be binary.
Second, the family that includes $H/2$ does not contribute to the step function
because its cells \straddl e the boundary of $\supH$ if they include 
a part of the boundary (see the dashed hexagon in \figref{fig:overlap}d).
Third, at least two $\hlf$-scaled edges,
namely of elements of $T^1_1$ and $T^1_2$, are required to cover any
edge of $H$.} is, as expected, not refinable.
\figref{fig:overlap}b shows the regions $i_\cap$ and $o_\cap$
where, for any spline $\spl$, $\spl(i_\cap) - \spl(o_\cap) = 2$ 
since 
$\spl(i_1) - \spl(o_1) = 1$ and $\spl(i_2) - \spl(o_2) = 1$.
\hfill $\Box$
}
\end{example}



The proof of \corref{cor:voronoi} showed that a pair of facets of 
the Voronoi cells of non-Cartesian crystallographic lattices 
form obtuse angles. By the symmetry of the cells, all facet angles are
obtuse and hence satisfy the angle criteria of \propref{prop:overlap}.
Together with the reflection symmetry of the lattices, \propref{prop:overlap}
implies the following generalization of \corref{cor:voronoi}.

\begin{corollary}
\label{cor:overVoronoi}
Efficient \oc\ families of splines obtained by convolution of 
the Voronoi cell of a non-Cartesian crystallographic root lattice
are not refinable.
\end{corollary}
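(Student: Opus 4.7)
The plan is to reduce the corollary to \propref{prop:overlap} by reusing the facet-angle inequalities already established in the proof of \corref{cor:voronoi} and then exploiting the strong symmetry of root-lattice Voronoi cells.

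First, I would recall that in the proof of \corref{cor:voronoi} a pair of adjacent Voronoi facets $\fc_a,\fc_b$ of each non-Cartesian root lattice $\AL_n$, $\AL^*_n$, $\DL_n$, $\DL^*_n$, $\BL_n$ (with its diagonal refinement) and $\EL_6,\EL_7,\EL_8$ was exhibited whose outward normals satisfy $\nr_a\cdot\nr_b>0$; that is, the facets meet with an obtuse angle $\ga>\pi/2$. I would then note that the Voronoi cell of a root lattice is invariant under the full point group of the lattice acting at its center, and that this point group is transitive on ordered pairs of abutting facets of the same combinatorial type. Consequently, \emph{every} facet $\fc_b$ has the property that each adjacent facet meets it at the same obtuse angle $\ga$. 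In particular, no facet of $\cl$ meets $\fc_b$ at an angle less than $\pi-\ga$, since $\pi-\ga<\pi/2<\ga$. This verifies the angle hypothesis of \propref{prop:overlap}.

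Next, I would verify the reflection hypothesis of \propref{prop:overlap}: that the reflection $\clp$ of $\cl$ across $\fc_b$ is again a cell of $\Tbase_0$. This follows because root lattices are centrally symmetric about each lattice point, so the reflection of the origin's Voronoi cell across its bisector facet $\fc_b$ lands on the Voronoi cell of the neighboring lattice site whose midpoint is the centroid of $\fc_b$; this is indeed a cell of the tessellation $\Tbase_0$. Combined with the assumed efficiency of the family $\{\Tbase_j\}_{j=0..J}$, all hypotheses of \propref{prop:overlap} are in force, and its conclusion---that $\bigcup_{j=0..J}\Chi(\Tbase_j)$ is not refinable---yields the corollary.

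The main obstacle, as I see it, is the angle hypothesis: one must be sure that \emph{no} adjacent facet of $\fc_b$ meets it at an angle less than $\pi-\ga$. For the symmetric crystallographic root lattices this is essentially automatic from the transitive symmetry action on facets, but for the $\BL_n$ refinement (which adds diagonal directions to the cube) facets are no longer all equivalent; there I would check the worst case directly using the explicit inner products already computed in \corref{cor:voronoi}, confirming that the minimal obtuse angle still dominates $\pi$ minus the maximal one. Once this bookkeeping is in place, the result follows immediately from \propref{prop:overlap}.
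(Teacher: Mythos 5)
Your proposal is correct and takes essentially the same approach as the paper, which likewise combines the obtuse-angle computations from \corref{cor:voronoi} with the symmetry of the Voronoi cells (so that \emph{all} abutting facet angles are obtuse, making the condition ``no angle less than $\pi-\ga$'' automatic since $\pi-\ga<\pi/2$) and the reflection symmetry of the lattices to invoke \propref{prop:overlap}. Two small repairs to your justifications: the dihedral angles need not all be \emph{equal} (e.g.\ the $\AL^*_n$ permutohedron has facets of several types meeting at distinct obtuse angles), though all-obtuse is all your inequality actually uses; and central symmetry of the lattice yields only the \emph{point} reflection through the facet midpoint, so the hypothesis that $\clp$ is a cell of $\Tbase_0$ should instead be derived from the lattice's invariance under the mirror reflection orthogonal to the facet normal (a Weyl-type reflection), which composed with translation by the corresponding lattice vector is exactly the reflection across $\fc_b$.
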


\section{Conclusion}
\label{sec:conclusion}
The paper identified several necessary criteria for tessellations
to admit a refinable space of (convolutions of) \ifn s.
The criteria are chosen for their simplicity.
For example, we showed that admissible \sit s must contain, for every facet, 
the whole plane through that facet.
Already for hex-splines and hex-spline superpositions,
an alternative algebraic proof of non-refinability 
is considerably more involved.

\corref{cor:voronoi} and \ref{cor:overVoronoi}
show that the increased isotropy of the Voronoi cells of 
non-Cartesian root lattices prevents refinability,
even for \oc\ spaces obtained by efficient superposition 
of shifted lattices. 
Increased isotropy of the Voronoi cells is however
the main reason for considering non-Cartesian lattices in the first place:
these lattices have high packing densities that can 
improve sampling efficiency \cite{IC::PetersenM1962}.

In conclusion, if we seek shift-invariant 
refinable classes of splines from convolving \ifn s of polyhedral cells, 
remarkably few options 
exist apart from tensor-product B-splines and box-splines.
This does not imply that more general lattices fail to 
have associated refinable splines that represent their symmetry 
and translational structure. 
In the bivariate setting, odd orders of continuity on the hexagonal dual
of the regular triangulation can be filled in
by half-box splines \cite{Prautzsch:BS:2002}. 
And if fractal support is acceptable,
\cite{journals/cagd/OswaldS03,Han:I:2002} provide refinable 
functions with approximately hexagonal footprint.
Combining families of symmetric box-splines, such as 
\cite{Kim:2011:SBS} yields refinable splines for any level of smoothness
and crystallographic structure. 
It is just the particular approach of 
convolving non-Cartesian lattice Voronoi cells that fails to provide
the important spline property of refinability.

\medskip
{\bf Acknowledgement } 
Zhangjin Huang kindly worked out a first, algebraic proof of non-refinability 
for \exref{ex:hexref}, a scenario I posed to him. Andrew Vince and 
Carl de Boor helped me clarify the exposition in its early stages.

\small{
\bibliographystyle{alpha}
\bibliography{p}
}

\end{document}


\begin{pspicture}(-0.5,-1)(4,4)
  \psaxes(4,4)
    \begin{psclip}{%
        \pscustom[linecolor=red]{%
               \psplot{0.5}{4}{2 x div}
                      \lineto(4,4)}
                             \pscustom[linecolor=blue]{%
                                      \psplot{0}{3}{3 x x mul 3 div sub}
                                               \lineto(0,0)%
                                                      }%
                                                        }
                                                          \psframe*[linecolor=lightgray](4,4)
                                                            \end{psclip}%
                                                            \end{pspicture}